\theoremstyle{plain}
\newtheorem{theorem}{Theorem}
\newtheorem{lemma}{Lemma}
\newtheorem{definition}{Definition}
\theoremstyle{remark}
\def\R{\mathbb{R}}
\newcommand{\nks}{\ensuremath{\mathbb{S}^3 \times
\mathbb{S}^3}}
 \numberwithin{equation}{section}
\title[$H$-umbilical Lagrangian submanifolds of the nearly Kähler $\mathbb{S}^3\times\mathbb{S}^3$]
{$H$-umbilical Lagrangian submanifolds \\ of the nearly Kähler $\mathbb{S}^3\times\mathbb{S}^3$}
\author[M. Anti\'c]{Miroslava Anti\'c}
\address{University of Belgrade, Faculty of Mathematics, Studentski trg 16, pb. 550, 11000 Belgrade, Serbia}
\email{mira@matf.bg.ac.rs}
\author[M. Moruz]{Marilena Moruz}
\address{KU Leuven, Department of Mathematics, Celestijnenlaan 200B -- Box 2400, 3001 Leuven, Belgium}
\email{marilena.moruz@kuleuven.be}
\author[J. Van der Veken]{Joeri Van der Veken}
\address{KU Leuven, Department of Mathematics, Celestijnenlaan 200B -- Box 2400, 3001 Leuven, Belgium} 
\email{joeri.vanderveken@kuleuven.be}
\thanks{M. Moruz is a postdoctoral fellow of the Research Foundation -- Flanders (FWO). J. Van der Veken is supported by the Excellence of Science project G0H4518N of the Belgian government, project 3E160361 of the KU Leuven Research Fund and collaboration project G0F2319N of the Research Foundation -- Flanders (FWO) and the National Natural Science Foundation of China (NSFC)}
\begin{document}

\keywords{nearly K\"ahler $\mathbb{S}^3\times\mathbb{S}^3$; Lagrangian submanifolds; $H$-umbilical}

\subjclass[2010]{53B25, 53C15, 53C42, 53D12}

\begin{abstract}
$H$-umbilicity was introduced as an analogue of total umbilicity for Lagrangian submanifolds since, in some relevant cases, totally umbilical Lagrangian submanifolds are automatically totally geodesic. In this paper, we show that in the homogeneous nearly K\"ahler $\mathbb{S}^3\times\mathbb{S}^3$, also $H$-umbilical Lagrangian submanifolds are automatically totally geodesic.
\end{abstract}

\maketitle

\section{Introduction}

Gray and Hervella \cite{GH} have distinguished sixteen classes of almost Hermitian manifolds, out of which the K\"ahler manifolds represent one of the most important classes. K\"ahler manifolds are defined as the almost Hermitian manifolds for which the almost complex structure $J$ is parallel: $\tilde \nabla J = 0$, where $\tilde \nabla$ is the Levi Civita connection. If this condition is relaxed to the skew-symmetry of the tensor $\tilde \nabla J$, then the manifold is called nearly K\"ahler. It is known that K\"ahler manifolds admit a complex, Riemannian and symplectic structure that are all compatible with each other.  On a nearly K\"ahler manifold, the fundamental two-form is not necessarily closed and therefore nearly K\"ahler manifolds are neither complex nor symplectic, unless they are K\"ahler, see for example \cite{thesis}.

The interest in nearly K\"ahler manifolds grew especially because they are examples of geometries with torsion and therefore they have applications in mathematical physics \cite{A}. A very important result is the structure theorem given by Nagy \cite{N}. He showed that strict nearly K\"ahler manifolds are Riemannian products of six-dimensional nearly Kähler manifolds, some homogenous nearly K\"ahler spaces and  twistor spaces over quaternionic K\"ahler manifolds with positive scalar curvature, endowed with the canonical nearly K\"ahler metric.

Butruille showed in \cite{B} that the only homogeneous $6$-dimensional nearly K\"ahler manifolds are the nearly K\"ahler $6$-sphere $\mathbb{S}^6$, the manifold $\nks$, the projective space $\mathbb{C}P^3$ and the flag manifold $SU(3)/U(1) \times U(1)$, where the last three are not endowed with their ``standard metrics''. All these spaces are compact $3$-symmetric spaces. Furthermore, in 2015, Cort\'es and V\'asquez have discovered the first non-homogeneous (but locally homogeneous) nearly K\"ahler structures in \cite{CV}, while in 2017 the first complete non-homogeneous nearly K\"ahler structures were constructed on $\mathbb{S}^6$ and $\nks$ by Foscolo and Haskins in \cite{FH}.

In the present paper, we are interested in the study of the homogeneous nearly K\"ahler $\nks$ from the point of view of its submanifolds. This aligns with the interest of other authors who have recently contributed with results about various properties of different types of submanifolds of $\nks$, such as almost complex surfaces, hypersurfaces and CR or Lagrangian submanifolds (see, for instance \cite{Antic, Bektas, Bektas1, Dioos1, Dioos2, Hu, MS}). The study of Lagrangian submanifolds originates from symplectic geometry and classical mechanics. Sch\"afer and Smozcyk \cite{SS} proved that Lagrangian submanifolds of nearly K\"ahler manifolds of dimension six and twistor spaces are always minimal and orientable. 

Recall that a submanifold $M$ of dimension $n \geq 2$ of a manifold $\tilde M$ is said to be totally umbilical if its second fundamental form $h$ is given by 
\begin{equation} \label{totallyumbilical}
h(X,Y)=g(X,Y)\xi
\end{equation}
for all $X$ and $Y$ tangent to $M$, where $g$ is the metric and $\xi$ is a normal vector field along the submanifold, not depending on $X$ and $Y$. It follows directly from \eqref{totallyumbilical} that $\xi = H$, the mean curvature vector field of the immersion. In the case that the ambient space carries an almost complex structure $J$, we define the cubic form acting on tangent vectors to $M$ by $(X,Y,Z) \mapsto g(h(X,Y),JZ)$ and the following result is easy to prove.
\begin{lemma} \label{lem:tu}
Let $M$ be a totally umbilical Lagrangian submanifold of an almost Hermitian manifold $(\tilde M,g,J)$. If the cubic form of the immersion is totally symmetric in its three arguments, then the submanifold is totally geodesic, i.e., the second fundamental form vanishes identically.
\end{lemma}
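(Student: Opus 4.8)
The plan is to feed the total umbilicity condition into the cubic form, use the Lagrangian condition to rewrite everything tangentially, and then extract an algebraic contradiction from the symmetry hypothesis. First I would use \eqref{totallyumbilical} to write $h(X,Y) = g(X,Y)\xi$ with $\xi = H$, so that the cubic form becomes
\[
g(h(X,Y),JZ) = g(X,Y)\, g(\xi,JZ).
\]
Since $M$ is Lagrangian, $J$ maps the normal bundle onto the tangent bundle, so $J\xi$ is tangent to $M$; combining $g(J\cdot,J\cdot) = g(\cdot,\cdot)$ with $J^2 = -\operatorname{id}$ gives $g(\xi,JZ) = -g(J\xi,Z)$. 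Setting $T := J\xi$, a tangent vector field, the cubic form takes the compact form $-g(X,Y)\,g(T,Z)$.

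The cubic form is already symmetric in its first two arguments, so total symmetry is equivalent to invariance under swapping the second and third arguments. Imposing this on the expression above yields
\[
g(X,Y)\,g(T,Z) = g(X,Z)\,g(T,Y)
\]
for all tangent vectors $X,Y,Z$. This is where the hypothesis does its work, and the rest is linear algebra.

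The decisive step is a rank count. Putting $Z = T$ gives $g(T,T)\,g(X,Y) = g(X,T)\,g(T,Y)$, which asserts that the induced metric on each tangent space is a scalar multiple of the rank-one tensor $g(T,\cdot)\otimes g(T,\cdot)$. But the induced metric is positive definite of rank $n = \dim M$, and here the crucial hypothesis $n \ge 2$ enters: one may choose a nonzero $X$ orthogonal to $T$, for which the right-hand side vanishes while the left-hand side equals $g(T,T)\,g(X,X) \ge 0$, with equality only if $T = 0$. Hence $T = J\xi = 0$, and since $J$ is an isomorphism, $\xi = 0$ and therefore $h \equiv 0$.

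I expect the only delicate point to be the reformulation $g(\xi,JZ) = -g(J\xi,Z)$, which converts a pairing of normal vectors into a tangential one and thereby exposes the rank-one structure; once $T$ has been identified, the contradiction with positive-definiteness in dimension $n \ge 2$ is immediate and forces total geodesy.
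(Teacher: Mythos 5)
Your proof is correct, and since the paper itself omits the argument (stating only that the lemma ``is easy to prove''), your write-up supplies exactly the intended reasoning: substituting $h(X,Y)=g(X,Y)H$ into the cubic form, using the Lagrangian condition and the compatibility of $g$ with $J$ to identify the tangent field $T=JH$, and then letting the symmetry in the last two arguments force the rank-one identity $g(T,T)g(X,Y)=g(X,T)g(T,Y)$, which is incompatible with positive definiteness in dimension $n\ge 2$ unless $T=0$. No gaps; the only point worth noting is that this is precisely where the hypothesis $n\ge 2$ from the definition of total umbilicity is used, as you correctly flag.
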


Symmetry of the cubic form holds for important classes of Lagrangian submanifolds, such as all Lagrangian submanifolds of all K\"ahler manifolds and nearly K\"ahler manifolds, see for example \cite{thesis}. This lead Chen to introduce the concept of $H$-umbilicity as an alternative for total umbilicity for Lagrangian submanifolds in \cite{Chen}. To motivate the definition, we first remark that, in the case of a Lagrangian submanifold, \eqref{totallyumbilical} can be expressed as follows: in a neighborhood of any point of $M$, there exist a local orthonormal frame $\{\mathbf{U}_1,\ldots,\mathbf{U}_n\}$ on $M$ and a local function $\lambda$ such that the second fundamental form is given by
\begin{align*} 
h(\mathbf{U}_1,\mathbf{U}_1) = \lambda J \mathbf{U}_1, \quad 
h(\mathbf{U}_1,\mathbf{U}_i) = 0, \quad 
h(\mathbf{U}_i,\mathbf{U}_j) = \lambda \delta_{ij} J \mathbf{U}_1
\end{align*}
for $i,j \in \{2,\ldots,n\}$.

\begin{definition} \cite{Chen} \label{def:H-umbilical}
Let $M$ be a Lagrangian submanifold of dimension $n \geq 2$ of an almost Hermitian manifold $(\tilde M,g,J)$. We say that $M$ is \emph{$H$-umbilical} if, in the neighborhood of any point, there exist a local orthonormal frame $\{\mathbf{U}_1,\ldots,\mathbf{U}_n\}$ on $M$ and local functions $\lambda$ and $\mu$ such that the second fundamental form $h$ is given by
\begin{align} \label{humbilical}
h(\mathbf{U}_1,\mathbf{U}_1) = \lambda J \mathbf{U}_1, \quad 
h(\mathbf{U}_1,\mathbf{U}_i) = \mu J \mathbf{U}_i, \quad 
h(\mathbf{U}_i,\mathbf{U}_j) = \mu \delta_{ij} J \mathbf{U}_1
\end{align}
for $i,j \in \{2,\ldots,n\}$.
\end{definition}
 
While there are plenty of examples and classification results of $H$-umbilical Lagrangian submanifolds in the K\"ahler setting --see for example \cite{Chen} and \cite{Chen2} for the classification in complex space forms-- we prove in this paper that no new examples occur in the nearly K\"ahler $\nks$. 

\begin{theorem}\label{Th}
An $H$-umbilical Lagrangian submanifold of $\nks$ is totally geodesic.
\end{theorem}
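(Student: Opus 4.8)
The plan is to specialise the $H$-umbilical condition to the Lagrangian situation in $\nks$, to use minimality to eliminate one of the two functions, and then to force the remaining function to vanish by confronting the second fundamental form with the Codazzi equation and the explicit curvature of $\nks$. A Lagrangian submanifold of the six-dimensional $\nks$ has dimension $n=3$, so Definition~\ref{def:H-umbilical} involves the frame $\{\mathbf U_1,\mathbf U_2,\mathbf U_3\}$ and the two functions $\lambda,\mu$. By the theorem of Sch\"afer and Smoczyk, $M$ is minimal, and taking the trace of \eqref{humbilical} gives $\operatorname{tr}h=(\lambda+2\mu)J\mathbf U_1$, which equals $3H=0$; hence $\lambda=-2\mu$. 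Thus $h$ is governed by the single function $\mu$, and the shape operators in the normal frame $\{J\mathbf U_1,J\mathbf U_2,J\mathbf U_3\}$ take the explicit form
\begin{equation*}
A_{J\mathbf U_1}=\operatorname{diag}(-2\mu,\mu,\mu),\qquad
A_{J\mathbf U_2}=\mu(E_{12}+E_{21}),\qquad
A_{J\mathbf U_3}=\mu(E_{13}+E_{31}),
\end{equation*}
where $E_{ab}$ is the elementary matrix. The whole problem is then to show $\mu\equiv 0$. I emphasise at the outset that Lemma~\ref{lem:tu} cannot be applied directly: the cubic form of any Lagrangian submanifold of a nearly K\"ahler manifold is already symmetric, and one checks that the $H$-umbilical $h$ above automatically has symmetric cubic form, so symmetry alone is powerless and the specific geometry of $\nks$ must be used.

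The core of the argument will be the Codazzi equation $(\nabla_X h)(Y,Z)-(\nabla_Y h)(X,Z)=(\tilde R(X,Y)Z)^{\perp}$. To evaluate the left-hand side I would express the induced and normal connections through the Christoffel symbols $\Gamma^k_{ij}$ of the frame, using that for a Lagrangian submanifold of a nearly K\"ahler manifold the normal connection satisfies $\nabla^{\perp}_X(JY)=G(X,Y)+J\nabla_X Y$ with $G=\tilde\nabla J$, and that $G(X,Y)$ is normal whenever $X,Y$ are tangent. This trades the unknown normal connection coefficients for the $\Gamma^k_{ij}$ together with the structure constants $g(G(\mathbf U_i,\mathbf U_j),J\mathbf U_k)$, which are fixed by the nearly K\"ahler structure. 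Differentiating the right-hand sides of \eqref{humbilical} and collecting terms then expresses the left-hand side of Codazzi purely in terms of $\mu$, its derivatives $\mathbf U_i(\mu)$, the $\Gamma^k_{ij}$ and the $G$-structure constants.

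The right-hand side I would compute from the known expression of the Riemann tensor of $\nks$ in terms of $g$, $J$ and the almost product structure $P$ (with $P^2=\operatorname{Id}$ and $PJ=-JP$); its normal part introduces the tangential and normal components of $P\mathbf U_i$ as auxiliary angle functions. Running Codazzi over the various index combinations then yields a coupled system: first-order equations of the form $\mathbf U_i(\mu)=(\cdots)\mu$ together with algebraic relations in which $\mu$ multiplies expressions built from the $P$-angle functions and the $G$-structure constants. Once this system is closed, the algebraic relations should be incompatible with $\mu\neq 0$ on any open set, yielding $\mu\equiv 0$, hence $h\equiv 0$ and $M$ totally geodesic.

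I expect the decisive step, and the main obstacle, to be the control of the almost product structure $P$. Since $P$ does not preserve $TM$, the curvature term genuinely depends on how each $P\mathbf U_i$ splits into tangent and normal parts, so the extra angle functions cannot be avoided. Closing the system will therefore require feeding in the covariant-derivative formula for $P$ on $\nks$ (equivalently, invoking the Gauss equation and a second round of Codazzi) in order to eliminate these auxiliary functions and reduce everything to relations in $\mu$ alone. Managing this coupling consistently, rather than the individual computations, is where the real difficulty lies.
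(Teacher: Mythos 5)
Your setup is sound: the reduction $\lambda=-2\mu$ via minimality is exactly the paper's first step (their Lemma \ref{lem:invariant_form_h}), your shape operators are correct, and your observation that Lemma \ref{lem:tu} is powerless here is a good one. But the proof stops precisely where the work begins. You correctly identify that the system must be closed by feeding in the covariant derivative of $P$, and then declare this "the main obstacle" without carrying it out; that is the entire content of the theorem. The paper's way through is to \emph{invert your choice of frame}: instead of working in the $H$-umbilical frame $\{\mathbf U_i\}$ with the $P$-angles as auxiliary unknowns, one diagonalizes $P$ (writing $PX=AX+JBX$, $AE_i=\cos(2\theta_i)E_i$, $BE_i=\sin(2\theta_i)E_i$) and encodes the $H$-umbilical condition invariantly through a vector field $V=\mu^{1/3}\mathbf U_1$ with components $v_i$ in the $P$-eigenframe. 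The payoff is the identity $h_{ij}^k\cos(\theta_j-\theta_k)=(\tfrac{1}{2\sqrt3}\varepsilon_{ij}^k-\omega_{ij}^k)\sin(\theta_j-\theta_k)$ coming from $\tilde\nabla P$, which determines \emph{all} the connection coefficients $\omega_{ij}^k$ explicitly in terms of $v_1,v_2,v_3$ and the angles. In your frame the $\Gamma^k_{ij}$ remain free unknowns and there is no comparable mechanism to eliminate them, so your system does not close as written.

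A second, more structural gap: your expected endgame --- that the algebraic relations are "incompatible with $\mu\neq0$ on any open set" --- is not how the argument actually terminates. The Codazzi equations lead to a five-case analysis, and in several branches one does \emph{not} conclude $\mu=0$; instead one concludes that $\sin(\theta_i-\theta_j)=0$ or $\sin(2(\theta_1-\theta_2))=\sin(2(\theta_1-\theta_3))=0$, i.e., that two angle functions coincide modulo $\pi$. Total geodesy then follows from a separate rigidity fact (the paper's Lemma \ref{lem:angle_functions}: the angle functions sum to zero mod $\pi$, and two of them being equal mod $\pi$ forces $h\equiv0$), not from the vanishing of $\mu$. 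Without this lemma, and without the explicit compatibility computations between the two expressions for $E_1(v_2)$ and $E_1(v_3)$ that produce the governing algebraic system, the proof cannot be completed along the lines you sketch.
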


Note that a complete classification of totally geodesic Lagrangian submanifolds of $\nks$ is given in \cite{Zhang}.

The remaining part of this paper is organized as follows: in Section 2 we introduce the necessary basics about $\nks$ and its Lagrangian submanifolds and in Section 3 we will prove Theorem \ref{Th}.

%%%%%%%%%%%%%%%%%%%%%%%
\section{Preliminaries}
%%%%%%%%%%%%%%%%%%%%%%%

We will briefly present the homogeneous nearly K\"ahler structure on $\nks$ as well as the basic equations for Lagrangian submanifolds of $\nks$, as they have been introduced in \cite{complexsurfaces} and \cite{Zhang}.

Looking at the $3$-sphere as the set of all unit quaternions, it is not hard to see that the tangent space at $p \in \mathbb S^3$ can be identified with $T_p\mathbb S^3 = \{ p\alpha \, | \, \alpha \mbox{ is an imaginary quaternion}\}$, where $p\alpha$ denotes the quaternionic multiplication of the unit quaternion $p$ and the imaginary quaternion~$\alpha$. A tangent vector to $\nks$ at a point $(p,q) \in \nks$ can hence be written as $( p\alpha,q\beta)$ for imaginary quaternions $\alpha$ and $\beta$.

Using this notation, the almost complex structure $J$ on the nearly K\"ahler $\mathbb{S}^3\times\mathbb{S}^3$ is defined by 
\begin{equation}
J_{(p,q)}(p\alpha,q\beta) := \frac{1}{\sqrt{3}} (p(2\beta-\alpha), q(\beta-2\alpha)).
\end{equation}
A Hermitian metric which is compatible with this almost complex structure can now be built starting from the product of the round metrics of curvature $1$ in a standard way: 
\begin{equation}\label{g}
\begin{aligned}
g_{(p,q)}((p\alpha,q\beta),(p\alpha',q\beta'))
:=& \ \frac{1}{2} \left( \langle (p\alpha,q\beta),(p\alpha',q\beta') \rangle + \langle J_{(p,q)}(p\alpha,q\beta),J_{(p,q)}(p\alpha',q\beta') \rangle \right)\\
=& \ \frac{4}{3} \left( \langle \alpha,\alpha' \rangle + \langle \beta,\beta' \rangle \right) - \frac{2}{3} \left( \langle \alpha,\beta' \rangle + \langle \alpha',\beta \rangle \right),
\end{aligned}
\end{equation}
where $\langle \cdot \, ,\cdot \rangle $ stands for the product of the round metrics of curvature $1$ (or, equivalently, for the Euclidean metric on $\R^8$) in the first line and for the Euclidean metric on $\R^3$ in the second line. Following \cite{complexsurfaces}, we define one more tensor field on $\nks$, namely
\begin{equation}\label{defP}
P_{(p,q)}(p\alpha, q\beta) := (p\beta,q\alpha).
\end{equation}
Note that $P$ is an almost product structure since it is involutive and symmetric with respect to~$g$. Moreover, it anti-commutes with the almost complex structure: $JP=-PJ$.

Denote by $\tilde\nabla$ the Levi-Civita connection on $\mathbb{S}^3\times \mathbb{S}^3$ with respect to the metric $g$ and let $G:=\tilde\nabla J$. Since $G$ is skew-symmetric, the Hermitian manifold $(\nks,g,J)$ is indeed nearly K\"ahler. 
%Moreover, the tensor field $G$ satisfies
%\begin{equation}
%G(X,JY)=-JG(X,Y), \quad g(G(X,Y),Z)+g(G(X,Z),Y)=0
%\end{equation}
%for any vector fields $X,Y,Z$ tangent to $\mathbb{S}^3\times\mathbb{S}^3$.

Now let $M$ be a Lagrangian submanifold of $\nks$. This means that $J$ maps tangent vectors to $M$ to normal vectors to $M$ and vice versa, which, in particular, implies that $M$ is $3$-dimensional. The formulas of Gauss and Weingarten, which hold for general submanifolds, state respectively
\begin{align*}
& \tilde{\nabla}_XY = \nabla_XY + h(X,Y),\\
& \tilde{\nabla}_X\xi = -S_{\xi}X + \nabla^{\perp}_X\xi
\end{align*}
for vector fields $X$ and $Y$ tangent to $M$ and a vector field $\xi$ normal to $M$. Here, $\nabla$ is the Levi-Civita connection on $M$ with respect to the metric induced by $g$, $h$ is the second fundamental form of the immersion, $S_{\xi}$ is the shape operator with respect to the normal vector field $\xi$ and $\nabla^{\perp}$ is the normal connection. In the case of a Lagrangian submanifold of $\nks$ --or, more generally, a Lagrangian submanifold of any (nearly) K\"ahler manifold-- the following equations describe the relations between some of these geometric data:
\begin{align}
& h(X,Y) = JS_{JX}Y = JS_{JY} X, \label{eq:h_for_nearly_Kahler}\\
& \nabla^{\perp}_XJY = J\nabla_XY + G(X, Y ) \label{eq:nabla_perp_for_nearly_Kahler}
\end{align}
for all vector fields $X$ and $Y$ tangent to the Lagrangian submanifold $M$. In particular, it follows from \eqref{eq:h_for_nearly_Kahler} that the cubic form $(X,Y,Z) \mapsto g(h(X,Y ),JZ)$ on $M$ is totally symmetric.
%Moreover, we have
%\begin{equation*}
%(\tilde \nabla G)(X, Y,Z) = \frac{1}{3}(g(X,Z)JY - g(X, Y )JZ - g(JY,Z)X).
%\end{equation*}

Since $M$ is Lagrangian, the pull-back of $T(\mathbb{S}^3 \times \mathbb{S}^3)$ to $M$ splits into $TM\oplus JTM$. There are two endomorphisms $A,B\colon TM\to TM$ such that the restriction $P|_{TM}$ of $P$ to the submanifold is given by $PX = AX + JBX$ for all $X\in TM$. It follows immediately from the properties of $P$ that $A$ and $B$ are symmetric, that they commute and satisfy  $A^2+B^2=\mbox{Id}$. Therefore, for each point $p\in M$ there is an orthonormal basis $\{e_1,e_2,e_3\}$ of $T_p M$ and real numbers $\theta_1$, $\theta_2$ and $\theta_3$, determined up to an integer multiple of $\pi$, such that $A e_i = \cos(2\theta_i) e_i$ and $Be_i = \sin(2\theta_i) Je_i$ for $i \in \{1,2,3\}$. We may extend the orthonormal basis $\{e_1, e_2, e_3\}$ at a point to a  differentiable frame $\{E_1,E_2,E_3\}$ on an open neighborhood where the multiplicities of the eigenvalues of $A$ and $B$ are constant. Therefore, there exist a local orthonormal frame $\{E_1,E_2,E_3\}$ and local functions $\theta_1$, $\theta_2$ and $\theta_3$ on an open dense subset of $M$ such that
\begin{equation}\label{diagop}
AE_i=\cos (2\theta_i)  E_i, \quad BE_i=\sin (2\theta_i) E_i
\end{equation}
for $i \in \{1,2,3\}$.

The following lemma is a combination of two lemmas proven in \cite{Dioos2}.

\begin{lemma} \cite{Dioos2} \label{lem:angle_functions}
Let $M$ be a Lagrangian submanifold of $\nks$ and define the local angle functions $\theta_1$, $\theta_2$ and $\theta_3$ as above. Then, the sum of these functions vanishes modulo $\pi$. Moreover, if two of the angle functions are equal modulo $\pi$, then the submanifold is totally geodesic.
\end{lemma}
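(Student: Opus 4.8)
The plan is to read both assertions off the first-order structure equations of the immersion, because neither is a purely pointwise consequence of the triple $(g,J,P)$. Indeed, if one decomposes $T\nks$ into the three mutually $g$-orthogonal, $J$- and $P$-invariant coordinate blocks $V_1\oplus V_2\oplus V_3$ coming from a basis of the imaginary quaternions, then a Lagrangian plane adapted to this splitting realises completely arbitrary angles $\theta_1,\theta_2,\theta_3$; hence any constraint on their sum can only come from the way $TM$ sits inside $\nks$ to first order, not from linear algebra at a point. Throughout I would work in the frame $\{E_1,E_2,E_3\}$ of \eqref{diagop}, write $c_{ijk}=g(h(E_i,E_j),JE_k)$ for the components of the cubic form (totally symmetric by \eqref{eq:h_for_nearly_Kahler}) and $\omega^k_{ij}=g(\nabla_{E_i}E_j,E_k)$ for the connection coefficients.

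For the first statement I would differentiate the eigenvalue relations \eqref{diagop}, equivalently $PE_i=\cos(2\theta_i)E_i+\sin(2\theta_i)JE_i$, covariantly along $M$. Expanding $\tilde\nabla_{E_m}(PE_i)$ with the formulas of Gauss and Weingarten and with \eqref{eq:nabla_perp_for_nearly_Kahler}, and comparing against $P\tilde\nabla_{E_m}E_i+(\tilde\nabla_{E_m}P)E_i$ after substituting the explicit expression for $\tilde\nabla P$ in terms of $G$ and $J$ on $\nks$ from \cite{complexsurfaces}, produces two families of identities: the tangential ($E_k$) components tie the derivatives $E_m(\theta_i)$ and the $\omega^k_{mi}$ to the cubic form $c$, while the normal ($JE_k$) components reproduce $G(E_m,E_i)$ from the same data. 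The constraint on $\theta_1+\theta_2+\theta_3$ then falls out of the \emph{compatibility} of these tangential and normal identities — equivalently the Codazzi equation for the known curvature of $\nks$ — and this is exactly where integrability enters: in the appropriate skew combination the terms carrying the cubic form and the derivatives $E_m(\theta_i)$ cancel, leaving a pure relation among the angle functions that reduces to $\theta_1+\theta_2+\theta_3\equiv0\pmod\pi$.

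For the second statement, assume without loss of generality $\theta_1\equiv\theta_2\pmod\pi$; by the first part $\theta_3\equiv-2\theta_1$. On $D=\mathrm{span}(E_1,E_2)$ the operators satisfy $A|_D=\cos(2\theta_1)\,\mathrm{Id}_D$ and $B|_D=\sin(2\theta_1)\,\mathrm{Id}_D$, so $P$ acts on $D$ as $\cos(2\theta_1)\,\mathrm{Id}+\sin(2\theta_1)J$ and the frame inside $D$ is only determined up to a rotation. I would feed $\theta_1=\theta_2$ into the identities of the previous paragraph: every coefficient carrying a factor $\sin(\theta_1-\theta_2)$ drops out, and the residual equations, read together with the total symmetry of $c$ and with the minimality $\sum_i c_{iik}=0$ of the immersion (Sch\"afer--Smoczyk \cite{SS}), become an overdetermined linear system in the $c_{ijk}$. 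I expect this system to force every $c_{ijk}=0$, that is $h=0$, so that $M$ is totally geodesic.

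The main obstacle is the middle step: correctly deriving the full set of structure equations — the tangential and normal parts of the differentiated $P$, together with the Codazzi equation and the explicit curvature and tensor $G$ of $\nks$ — and keeping all constants and signs straight, since both the value of $\theta_1+\theta_2+\theta_3$ and the collapse $h=0$ hinge entirely on these coefficients (this is the content of the two lemmas of \cite{Dioos2} being combined here). A secondary subtlety is that \eqref{diagop} is only available on the open dense subset where the eigenvalue multiplicities of $A$ and $B$ are constant, so after establishing $h=0$ there one must invoke continuity to propagate the conclusion to all of $M$.
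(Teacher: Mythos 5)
First, note that the paper does not prove this lemma at all: it is imported verbatim from \cite{Dioos2}, so the comparison below is with the argument given there. For the first assertion, your guiding premise is off. The relation $\theta_1+\theta_2+\theta_3\equiv 0\pmod{\pi}$ \emph{is} a pointwise linear-algebra fact once the tensor $G=\tilde\nabla J$ is included in the pointwise data: the algebraic identity $PG(X,Y)+G(PX,PY)=0$ (established in \cite{complexsurfaces} from the explicit formulas on $\nks$), applied to $X=E_i$, $Y=E_j$ and combined with \eqref{diagop}, \eqref{jg} and $G(JX,Y)=-JG(X,Y)$, yields
\[
\cos(2\theta_k)=\cos\bigl(2(\theta_i+\theta_j)\bigr),\qquad \sin(2\theta_k)=-\sin\bigl(2(\theta_i+\theta_j)\bigr),
\]
hence $2\theta_k\equiv-2(\theta_i+\theta_j)\pmod{2\pi}$, which is the claim. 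No differentiation of the frame and certainly no Codazzi equation are needed; your route through $\tilde\nabla P$ would eventually run into the same identity, but the appeal to ``compatibility of the tangential and normal identities, equivalently the Codazzi equation'' is not an argument and points in the wrong direction.

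The genuine gap is in the second assertion. Carry your own plan through with $\theta_1\equiv\theta_2\pmod{\pi}$: equation \eqref{eq:relation_hijk_omegaijk} with $\{j,k\}=\{1,2\}$ gives $h_{i1}^2=0$ (the cosine is $\pm1$, the sine vanishes), so by total symmetry of the cubic form every component whose index multiset contains both a $1$ and a $2$ vanishes; the relations $E_i(\theta_1)=E_i(\theta_2)$ and $E_i(\theta_3)=-2E_i(\theta_1)$ translate via $E_i(\theta_j)=-h_{jj}^i$ into $h_{11}^i=h_{22}^i$ and $h_{33}^i=-2h_{11}^i$, which kill $h_{11}^1$, $h_{22}^2$, $h_{33}^1$, $h_{33}^2$; and minimality then adds nothing. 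What survives is the one-parameter family $h_{11}^3=h_{22}^3=-\tfrac12 h_{33}^3=\mu$ with all other components zero --- precisely the $H$-umbilical form \eqref{humbilical} with axis $E_3$. So the ``overdetermined linear system'' you expect to force all $c_{ijk}=0$ does not: first-order information (the $\tilde\nabla P$ identities, symmetry, minimality) is insufficient, and one must invoke the Codazzi equation \eqref{cdz} to kill $\mu$. That remaining step is a genuine computation --- in effect the degenerate-angle case of Theorem \ref{Th} itself --- and it cannot be discharged by citing Theorem \ref{Th}, since the proof of that theorem uses the present lemma. Your closing remark about propagating $h=0$ from the open dense set by continuity is fine, but the core of the second statement is exactly the step you left as ``I expect''.
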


The next lemma follows from studying $\tilde\nabla P$ and was also proven in \cite{Dioos2}.
\begin{lemma} \cite{Dioos2} \label{lem:formulas_hijk}
Let $M$ be a Lagrangian submanifold of $\nks$ and take a local orthonormal frame $\{E_1,E_2,E_3\}$ and local functions $\theta_1$, $\theta_2$ and $\theta_3$ as above. Denote by $h_{ij}^k$ the components of the second fundamental form of the immersion and by $\omega_{ij}^k$ the components of the Levi-Civita connection of $M$ with respect to $\{E_1,E_2,E_3\}$, i.e.,
$$ h_{ij}^k := g(h(E_i,E_j),JE_k), \qquad \omega_{ij}^k := g(\nabla_{E_i}E_j,E_k) $$ 
for $i,j,k \in \{1,2,3\}$. Then
\begin{align}
& E_i(\theta_j) = -h_{jj}^i, \nonumber \\
& h_{ij}^k \cos(\theta_j-\theta_k) = \left( \frac{1}{2\sqrt 3} \varepsilon_{ij}^k - \omega_{ij}^k \right) \sin(\theta_j-\theta_k) \label{eq:relation_hijk_omegaijk}
\end{align}
for $i,j,k \in \{1,2,3\}$, with $j \neq k$. Here, $\varepsilon_{ij}^k$ is defined by
$$\varepsilon_{ij}^k = \left\{
\begin{array}{rl}
1 & \mbox{if $(ijk)$ is an even permutation of $(123)$,} \\
-1 & \mbox{if $(ijk)$ is an odd permutation of $(123)$,} \\
0 & \mbox{otherwise.}
\end{array}
\right.
$$
\end{lemma}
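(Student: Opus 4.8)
The plan is to differentiate the eigenvalue equations \eqref{diagop} and compare the outcome with the known formula for the covariant derivative of the almost product structure $P$ on $\nks$, established in \cite{complexsurfaces}. Since $PX = AX + JBX$, the relations \eqref{diagop} can be combined into the single identity
\begin{equation*}
PE_j = \cos(2\theta_j)\,E_j + \sin(2\theta_j)\,JE_j, \qquad j \in \{1,2,3\}.
\end{equation*}
I would apply $\tilde\nabla_{E_i}$ to both sides and evaluate the left-hand side in two ways: first by the product rule, $\tilde\nabla_{E_i}(PE_j) = (\tilde\nabla_{E_i}P)E_j + P\,\tilde\nabla_{E_i}E_j$, inserting the structure formula for $\tilde\nabla P$; and second by differentiating the right-hand side directly, using the formulas of Gauss and Weingarten together with $\tilde\nabla_{E_i}(JE_j) = G(E_i,E_j) + J\tilde\nabla_{E_i}E_j$.

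Next I would expand everything in the orthonormal frame $\{E_1,E_2,E_3,JE_1,JE_2,JE_3\}$ adapted to the Lagrangian splitting $TM\oplus JTM$, using $\nabla_{E_i}E_j = \sum_k \omega_{ij}^k E_k$, $h(E_i,E_j) = \sum_k h_{ij}^k\,JE_k$, and the elementary identities $PE_k = \cos(2\theta_k)E_k + \sin(2\theta_k)JE_k$ together with $P(JE_k) = -JPE_k = \sin(2\theta_k)E_k - \cos(2\theta_k)JE_k$. Matching the coefficients of each $E_k$ and each $JE_k$ turns the tensorial identity into a system of scalar equations relating $E_i(\theta_j)$, $h_{ij}^k$, $\omega_{ij}^k$ and the frame components of $G$ and of $\tilde\nabla P$.

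For the first formula I would read off the diagonal coefficients, namely those of $E_j$ and of $JE_j$. The contribution of $G$ drops out here, because the total skew-symmetry of $g(G(\cdot,\cdot),\cdot)$ gives $g(G(E_i,E_j),E_j) = g(G(E_i,E_j),JE_j) = 0$, and the structure formula for $\tilde\nabla P$ kills the corresponding diagonal term as well: writing the unit eigenvectors of $P$ in $\mathrm{span}\{E_j,JE_j\}$ as $v_j^+ = \cos\theta_j\,E_j + \sin\theta_j\,JE_j$ (eigenvalue $+1$) and $v_j^- = \sin\theta_j\,E_j - \cos\theta_j\,JE_j$ (eigenvalue $-1$), and using that $\tilde\nabla_{E_i}P$ is symmetric and anticommutes with $P$, both diagonal coefficients are proportional to the single scalar $g((\tilde\nabla_{E_i}P)v_j^+,v_j^-)$, which the structure formula shows to vanish. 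What remains is exactly $E_i(\theta_j) + h_{ij}^j = 0$; the symmetry of the cubic form, which holds by \eqref{eq:h_for_nearly_Kahler}, rewrites $h_{ij}^j = h_{jj}^i$ and yields $E_i(\theta_j) = -h_{jj}^i$.

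For the second formula I would read off the off-diagonal coefficient of $JE_k$ for $k\neq j$ (the coefficient of $E_k$ providing an equivalent relation as a consistency check). After substituting the explicit frame components of $G$ and of $\tilde\nabla P$ — where the factor $1/\sqrt3$ from the definition of $J$ enters and where the term $\tfrac{1}{2\sqrt3}\varepsilon_{ij}^k$ originates — the resulting equation involves $\cos(2\theta_j)+\cos(2\theta_k)$ and $\sin(2\theta_j)-\sin(2\theta_k)$. Applying the sum-to-product identities $\cos(2\theta_j)+\cos(2\theta_k) = 2\cos(\theta_j+\theta_k)\cos(\theta_j-\theta_k)$ and $\sin(2\theta_j)-\sin(2\theta_k) = 2\cos(\theta_j+\theta_k)\sin(\theta_j-\theta_k)$ factors out a common $\cos(\theta_j+\theta_k)$ and leaves precisely \eqref{eq:relation_hijk_omegaijk}. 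The main obstacle is twofold: pinning down the exact frame components of $G$ and of $\tilde\nabla P$ so that the $\tfrac{1}{2\sqrt3}\varepsilon_{ij}^k$ appears with the correct sign and the diagonal term genuinely vanishes, and carrying out the trigonometric bookkeeping so that only the angle differences $\theta_j-\theta_k$ survive; the common factor $\cos(\theta_j+\theta_k)$ is nonzero on a dense subset by Lemma \ref{lem:angle_functions}, which allows it to be cancelled.
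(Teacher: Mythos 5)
Your strategy is the one the paper itself points to: it quotes this lemma from \cite{Dioos2} with the remark that it ``follows from studying $\tilde\nabla P$'', and differentiating $PE_j=\cos(2\theta_j)E_j+\sin(2\theta_j)JE_j$ via Gauss--Weingarten and the structure formula for $\tilde\nabla P$ from \cite{complexsurfaces}, then matching coefficients in the adapted frame, is exactly that computation. Your handling of the diagonal part is sound: both diagonal coefficients are indeed proportional to $g((\tilde\nabla_{E_i}P)v_j^+,v_j^-)$, and this vanishes because the structure formula expresses $\tilde\nabla_{E_i}P$ through $G(E_i,\cdot)$ applied to vectors of $\mathrm{span}\{E_j,JE_j\}$, which $\varepsilon_{ij}^j=0$ forces to be orthogonal to that plane; together with $h_{ij}^j=h_{jj}^i$ this gives $E_i(\theta_j)=-h_{jj}^i$.

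The one genuine gap is the final cancellation in the off-diagonal part. The coefficient of $JE_k$ produces \eqref{eq:relation_hijk_omegaijk} multiplied by $2\cos(\theta_j+\theta_k)$, and you propose to cancel this factor because it is ``nonzero on a dense subset by Lemma \ref{lem:angle_functions}''. That lemma gives no such thing: it only says $\theta_1+\theta_2+\theta_3\equiv 0\pmod{\pi}$, so $\cos(\theta_j+\theta_k)=\pm\cos\theta_i$, and nothing prevents $\theta_i\equiv\pi/2\pmod{\pi}$ on an open set (note also that Lemma \ref{lem:formulas_hijk} must hold for totally geodesic submanifolds too, so the second half of Lemma \ref{lem:angle_functions} cannot be invoked). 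As written, the argument proves nothing on such a set. The repair is already contained in your own outline: the coefficient of $E_k$, which you set aside as ``an equivalent relation as a consistency check'', is in fact \eqref{eq:relation_hijk_omegaijk} multiplied by $2\sin(\theta_j+\theta_k)$, via $\cos(2\theta_j)-\cos(2\theta_k)=-2\sin(\theta_j+\theta_k)\sin(\theta_j-\theta_k)$. Since $\cos(\theta_j+\theta_k)$ and $\sin(\theta_j+\theta_k)$ cannot vanish simultaneously, the two coefficient equations together yield \eqref{eq:relation_hijk_omegaijk} at every point, with no density argument needed. With that substitution, and with the explicit structure formula for $\tilde\nabla P$ written out, your proof goes through.
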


By changing the orientation of one of the vectors in a local orthonormal frame $\{E_1,E_2,E_3\}$ on a Lagrangian submanifold of $\nks$ if necessary, we way assume that 
\begin{equation}\label{jg}
G(E_i,E_j) = -\frac{1}{\sqrt{3}} \sum_{k=1}^3 \varepsilon_{ij}^k JE_k,
\end{equation}
where $\varepsilon_{ij}^k$ is as in Lemma \ref{lem:formulas_hijk}, see for example \cite{thesis}. Hence, from now on, we will assume this for the frame $\{E_1,E_2,E_3\}$ constructed above.

Finally, we recall the equation of Codazzi for a Lagrangian submanifold $M$ of $\nks$ from \cite{Dioos2}. It states that
\begin{equation}\label{cdz}
\begin{aligned}
(\overline{\nabla}h)(X,Y,Z) & - (\overline{\nabla}h)(Y,X,Z) \\ = & \frac{1}{3}(g(AY,Z)JBX-g(AX,Z)JBY-g(BY,Z)JAX+g(BX,Z)JAY)
\end{aligned}
\end{equation}
for all tangent vector fields $X$, $Y$ and $Z$ on $M$, where the covariant derivative of the second fundamental form is defined as $(\overline{\nabla}h)(X,Y,Z) := \nabla^{\perp}_Xh(Y,Z) -h(\nabla_XY,Z) - h(Y,\nabla_XZ)$.

%The covariant derivatives of the operators $A$ and $B$ are given by
%\begin{align}
%(\nabla_X A)Y = B S_{JX} Y - Jh(X,BY) +\frac{1}{2}\left(JG(X,AY)-AJG(X,Y)\right),\label{opA}\\
%(\nabla_X B)Y =  Jh(X,AY) -A S_{JX} Y +\frac{1}{2}\left(JG(X,BY)-BJG(X,Y)\right),\label{opB}
%\end{align}
%where $\nabla$ is the induced connection on $M$, $h$ is the second fundamental form and $S$ is the shape operator of the Lagrangian immersion.

%The equations of Gauss and Codazzi, respectively, state that
%\begin{equation}
%\label{Gauss2}
%\begin{split}
%R(X,Y)Z &= \frac{5}{12}\left(g(Y,Z)X - g(X,Z)Y\right) \\
%&\quad + \frac{1}{3}\left(g(AY,Z)AX - g(AX,Z)AY  +  g(BY,Z)BX - g(BX,Z)BY\right)\\
%&\quad +[S_{JX},S_{JY}] Z
%\end{split}
%\end{equation}
%and

%%%%%%%%%%%%%%%%%%%%%%%%%%%%%%%%%%%%
\section{ Proof of Theorem \ref{Th}}
%%%%%%%%%%%%%%%%%%%%%%%%%%%%%%%%%%%%

First, we prove the following lemma.
\begin{lemma}\label{lem:invariant_form_h}
Let $M$ be an $H$-umbilical Lagrangian submanifold of the nearly K\"ahler $\nks$. Then, in a neihborhood of any point, there exists a vector field $V$ on $M$ such that the second fundamental form of $M$ is given by
%\begin{equation}\label{formofh}
%h(X,Y)=g(X,Y)JW+g(X,W)JY+g(Y,W)JX-\frac{5}{g(W,W)}g(X,W)g(Y,W)JW,
%\end{equation} 
\begin{equation}\label{formofh}
h(X,Y)=g(V,V)\left(g(Y,V)JX+g(X,V)JY+g(X,Y)JV\right)-5g(X,V)g(Y,V)JV,
\end{equation}
for all vector fields $X$ and $Y$ on $M$ in that neighborhood.
\end{lemma}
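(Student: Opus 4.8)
The plan is to recognize that the invariant expression (\ref{formofh}) is nothing but the frame-adapted definition (\ref{humbilical}) rewritten once two things are known: the relation forced on $\lambda$ and $\mu$ by minimality, and the fact that the vector $V$ can be taken to be a scalar multiple of the distinguished frame vector $\mathbf{U}_1$. So I would first exploit minimality to eliminate one of the two functions, then guess $V$ proportional to $\mathbf{U}_1$ and fix the proportionality constant by comparison.

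Concretely, I would first recall that every Lagrangian submanifold of a six-dimensional nearly K\"ahler manifold is minimal (Sch\"afer--Smoczyk \cite{SS}), so that $\sum_{i=1}^{3} h(\mathbf{U}_i,\mathbf{U}_i)=0$. Substituting the $H$-umbilical form (\ref{humbilical}) yields $(\lambda+2\mu)J\mathbf{U}_1=0$, hence $\lambda=-2\mu$. I would then set $V:=\mu^{1/3}\mathbf{U}_1$ (real cube root), which gives $g(V,V)=\mu^{2/3}$, $g(\mathbf{U}_1,V)=\mu^{1/3}$, $g(\mathbf{U}_i,V)=0$ for $i\in\{2,3\}$, and $JV=\mu^{1/3}J\mathbf{U}_1$. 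Since both sides of (\ref{formofh}) are symmetric and bilinear in $X,Y$, it suffices to evaluate on the pairs $(\mathbf{U}_i,\mathbf{U}_j)$: a direct substitution gives $h(\mathbf{U}_1,\mathbf{U}_1)=(3\mu-5\mu)J\mathbf{U}_1=-2\mu\,J\mathbf{U}_1=\lambda J\mathbf{U}_1$, together with $h(\mathbf{U}_1,\mathbf{U}_i)=\mu\,J\mathbf{U}_i$ and $h(\mathbf{U}_i,\mathbf{U}_j)=\mu\,\delta_{ij}J\mathbf{U}_1$, which is exactly (\ref{humbilical}) with $\lambda=-2\mu$. One checks also that $V$ is insensitive to the sign change $\mathbf{U}_1\mapsto-\mathbf{U}_1$ (under which $\mu\mapsto-\mu$), so it is well defined, smooth where $\mu\neq0$, and extends continuously by $V=0$ where $\mu=0$ (there $h$ itself vanishes, so (\ref{formofh}) still holds).

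The computation is routine once the normalization is spotted, so the only genuine content lies in the choice of $V$: the prefactor $g(V,V)$ in (\ref{formofh}) forces the scale $V=\mu^{1/3}\mathbf{U}_1$ through the off-diagonal components, after which the coefficient $-5$ is precisely what makes the diagonal term carry the minimality-enforced factor $3-5=-2=\lambda/\mu$. I therefore do not expect a serious obstacle; the only delicate point is the regularity of $V$ across the zero set of $\mu$, which is dealt with by the continuous extension $V=0$ noted above, and the fact that minimality (rather than mere symmetry of the cubic form, which holds automatically for the ansatz (\ref{humbilical})) is the ingredient that actually pins down $\lambda=-2\mu$.
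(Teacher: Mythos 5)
Your proposal is correct and follows exactly the paper's argument: invoke minimality from Sch\"afer--Smoczyk to get $\lambda=-2\mu$, set $V=\mu^{1/3}\mathbf{U}_1$, and check that \eqref{formofh} reproduces \eqref{humbilical}. The only difference is that you spell out the verification and the regularity of $V$ across $\{\mu=0\}$, which the paper leaves as ``easy to see.''
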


\begin{proof}%[Proof of Lemma \ref{lem:invariant_form_h}]
Let $M$ be a Lagrangian $H$-umbilical submanifold of the nearly K\"ahler $\nks$. By Definition~\ref{def:H-umbilical}, around every point of $M$, there exist a local  orthonormal frame $\{\mathbf{U}_1,\mathbf{U}_2,\mathbf{U}_3\}$ and local functions $\lambda$ and $\mu$ such that 
\begin{equation} \label{eq:H-umbilical_dimension_3}
\begin{aligned}
& h(\mathbf{U}_1,\mathbf{U}_1) = \lambda J\mathbf{U}_1, 
&& h(\mathbf{U}_1,\mathbf{U}_2) = \mu J\mathbf{U}_2, 
&& h(\mathbf{U}_1,\mathbf{U}_3) = \mu J\mathbf{U}_3, \\
& h(\mathbf{U}_2,\mathbf{U}_2) = \mu J\mathbf{U}_1, 
&& h(\mathbf{U}_2,\mathbf{U}_3) = 0, 
&& h(\mathbf{U}_3,\mathbf{U}_3) = \mu J\mathbf{U}_1.
\end{aligned}
\end{equation}
Since Lagrangian submanifolds of $\nks$ are minimal by \cite{SS}, we must have $\lambda=-2\mu$. If we define $V := \mu^{1/3} \mathbf{U}_1$, it is then easy to see that \eqref{eq:H-umbilical_dimension_3} is equivalent to \eqref{formofh}.
\end{proof}

We are now ready to give a proof of Theorem \ref{Th}.
\begin{proof}[Proof of Theorem \ref{Th}.]

Assume that $M$ is an $H$-umbilical Lagrangian submanifold of $\nks$. Let $\{E_1,E_2,E_3\}$ be an orthonormal frame of tangent vector fields and $\theta_1$, $\theta_2$ and $\theta_3$ functions on an open dense subset of $M$ such that \eqref{diagop} and \eqref{jg} hold. If $V$ is a local vector field on $M$ as defined in Lemma \ref{lem:invariant_form_h} and $v_1$, $v_2$ and $v_3$ are local functions such that $V=v_1E_1+v_2E_2+v_3E_3$, then it follows from Lemma \ref{lem:formulas_hijk} and Lemma \ref{lem:invariant_form_h} that
\begin{equation} \label{eq:hijk}
h_{ij}^k = (v_1^2+v_2^2+v_3^2)(v_i \delta_{jk} + v_j \delta_{ki} + v_k \delta_{ij}) - 5 v_i v_j v_k
\end{equation}
for all $i,j,k \in \{1,2,3\}$. 

By substituting \eqref{eq:hijk} into \eqref{eq:relation_hijk_omegaijk}, we obtain the following:
\begin{align}
& \omega_{11}^2 = -v_2(-4v_1^2+v_2^2+v_3^2)\cot(\theta_1-\theta_2),
&& \omega_{11}^3 = -v_3(-4v_1^2+v_2^2+v_3^2)\cot(\theta_1-\theta_3), \nonumber \\
& \omega_{22}^1 = -v_1(v_1^2-4v_2^2+v_3^2 )\cot(\theta_2-\theta_1),
&& \omega_{22}^3 = -v_3(v_1^2-4 v_2^2+v_3^2)\cot(\theta_2-\theta_3), \nonumber \\
& \omega_{33}^1 = -v_1(v_1^2+v_2^2-4 v_3^2)\cot(\theta_3-\theta_1),
&& \omega_{33}^2 = -v_2(v_1^2+v_2^2-4v_3^2)\cot(\theta_3-\theta_2) \label{eq:omegaijk} \\
& \omega_{12}^3 = \frac{1}{2\sqrt 3} + 5 v_1 v_2 v_3\cot(\theta_2-\theta_3),
&& \omega_{23}^1 = \frac{1}{2\sqrt 3} +5 v_1 v_2 v_3 \cot(\theta_3-\theta_1), \nonumber \\
& \omega_{31}^2 = \frac{1}{2\sqrt 3} + 5 v_1 v_2 v_3 \cot(\theta_1-\theta_2). 
&& \nonumber
\end{align}
Note that we have divided by factors of type $\sin(\theta_i-\theta_j)$. If such a factor is identically zero, two of the angle functions are equal modulo $\pi$ and hence $M$ is totally geodesic by Lemma~\ref{lem:angle_functions}, which would finish the proof. Hence, from now on, we assume we are not in this situation and restrict the open and dense subset of $M$ we are working on to the subset of those points at which $\sin(\theta_i-\theta_j) \neq 0$ for different $i,j \in \{1,2,3\}$. Given the symmetry $\omega_{ij}^k = -\omega_{ik}^j$, the equalities in \eqref{eq:omegaijk} completely determine the Levi-Civita connection of $M$.

The rest of the proof will rely on the equation of Codazzi. Evaluating \eqref{cdz} for suitable choices of $X$, $Y$ and $Z$ from $\{E_1,E_2,E_3\}$, in combination with \eqref{eq:nabla_perp_for_nearly_Kahler}, \eqref{diagop}, \eqref{jg}, \eqref{eq:hijk} and \eqref{eq:omegaijk}, yields differential equations for the functions $v_1$, $v_2$ and $v_3$, also involving the functions $\theta_1$, $\theta_2$ and $\theta_3$.

In particular, if we choose $(X,Y,Z)$ equal to $(E_1,E_2,E_1)$, $(E_1,E_2,E_2)$ and $(E_1,E_2,E_3)$ in \eqref{cdz}, we can express the derivatives $E_2(v_1)$, $E_1(v_2)$, $E_2(v_2)$, $E_1(v_3)$ and $E_2(v_3)$ in terms of $v_1$, $v_2$, $v_3$, $\theta_1$, $\theta_2$ and $\theta_3$. However, since these expressions are very long, we do not give them explicitly here. Similarly, by letting $(X,Y,Z)$ be equal to $(E_1,E_3,E_1)$, $(E_1,E_3,E_2)$ and $(E_1,E_3,E_3)$ in \eqref{cdz}, we can solve the resulting system of equations for $E_3(v_1)$, $E_1(v_2)$, $E_3(v_2)$, $E_1(v_3)$ and $E_3(v_3)$. By comparing the expressions for $E_1(v_2)$ and $E_1(v_3)$ from both approaches, we obtain the following system of algebraic equations:
\begin{equation}\label{system1}
\begin{aligned}
& v_1 v_2 \big[ (4 v_1^4 + v_3^4 + 4v_1^2v_2^2 + 12 v_1^2v_3^2 + v_2^2v_3^2) \sin(2 (\theta_1 - \theta_2)) \\
& \hspace{4cm} -(4v_1^4 + 4v_2^4 + 3 v_3^4 + 8v_1^2v_2^2  + 7 v_2^2 v_3^2) \sin(2 (\theta_1 - \theta_3)) \big] = 0, \\
& v_1 v_3 \big[ (4v_1^4 + 3v_2^4  + 4 v_3^4 + 8v_1^2v_3^2 + 7v_2^2v_3^2) \sin(2(\theta_1 - \theta_2)) \\ 
& \hspace{4cm} - (4v_1^4 + v_2^4 + 12v_1^2v_2^2 + 4v_1^2v_3^2 + v_2^2v_3^2) \sin(2(\theta_1-\theta_3)) \big] = 0.
\end{aligned}
\end{equation} 
These equations take the form $v_1 v_2 F_1(v_1,v_2,v_3,\theta_1,\theta_2,\theta_3)=0$ and $v_1 v_3 F_2(v_1,v_2,v_3,\theta_1,\theta_2,\theta_3)=0$ and it therefore suffices to consider the following five cases: Case 1: $v_2=v_3=0$; Case 2: $v_1=0$; Case 3: $v_2=F_2(v_1,v_2,v_3,\theta_1,\theta_2,\theta_3)=0$; Case 4: $v_3=F_1(v_1,v_2,v_3,\theta_1,\theta_2,\theta_3)=0$; Case 5: $F_1(v_1,v_2,v_3,\theta_1,\theta_2,\theta_3)=F_2(v_1,v_2,v_3,\theta_1,\theta_2,\theta_3)=0$.

Before proceeding, we remark that in order to solve for the above mentioned derivatives, and hence to obtain \eqref{system1}, we have assumed that
\begin{equation}\label{ec}
4 v_1^2 - 3 (v_2^2 + v_3^2)\neq 0.
\end{equation}
In fact, $4v_i^2-3(v_j^2+v_k^2)$ is non-zero for at least one permutation $(ijk)$ of $(123)$. If this was not true, we would have $V=0$ and hence $M$ would be totally geodesic by \eqref{formofh}. Therefore, we may indeed assume that $4 v_1^2 - 3 (v_2^2 + v_3^2)$ is non-zero.

\textbf{Case 1: $v_2=v_3=0$.} It is straightforward to see that the equation of Codazzi evaluated for $(X,Y,Z)=(E_1,E_2,E_1)$ implies that $v_1=0$. Hence, $V=0$ and $M$ is totally geodesic.

\textbf{Case 2: $v_1=0$.} The equation of Codazzi evaluated for $(X,Y,Z)=(E_1,E_2,E_1)$ gives three equations, from which we can express  $E_2(v_3)$, $E_1(v_2)$ and $E_2(v_2)$ in terms of $v_1$, $v_2$, $v_3$, $\theta_1$, $\theta_2$, $\theta_3$ and $E_1(v_3)$. Since these expressions are long, we omit giving them explicitly. However, if we replace these derivatives in the equation of Codazzi for $(X,Y,Z)=(E_1,E_2,E_2)$, we obtain 
% \red{[This does not make sense: there is still an $E_2(v_2)$ on the right hand side of the first equation... Please try to paraphrase the reasoning (say precisely which equations to use, without giving long expressions, as you did above) to jump directly to the last two equations of Case 2.]}
%\begin{align*}
%E_2(v_3)&=-\frac{v_2 v_3}{v_2^2 + 3 v_3^2} ((v_2^2 + v_3^2) (5 v_2^2\cot(\theta_1 - \theta_2) + 
%5 v_3^2 \cot(\theta_1 - \theta_3) + (4 v_2^2 - 
%v_3^2) \cot(\theta_2 - \theta_3) ) + 2 E_2(v_2)),\\
%E_1(v_2)&=\frac{1}{3 (3 v_2^2 + v_3^2)}( \sqrt{3} v_2^2 v_3 + \sqrt{3} v_3^3 - 6 v_2 v_3 E_1(v_3) ),
%\end{align*}
%\begin{align*}
%E_2(v_2)=\frac{1}{9 (v_2^2 + v_3^2)^2}\left( -3 v_2^2 (4 v_2^2 - 3 v_3^2) (v_2^2 + 
%v_3^2)^2 \cot(\theta_1 - \theta_2) + 
%3 v_3^2 (v_2^2 + v_3^2)^2 (-4 v_2^2 + 3 v_3^2)\cot(\theta_1 - \theta_3)+
%\right.\\
%\left.
%36 v_2^6 v_3^2 \cot(\theta_2 - \theta_3) + 
%63 v_2^4 v_3^4 \cot(\theta_2 - \theta_3) +18 v_2^2 v_3^6 \cot(\theta_2 - %\theta_3)- 9 v_3^8 \cot(\theta_2 - \theta_3)-
%\right.\\
%\left.
%v_2^2 \cos(2 \theta_2 )\sin(2 \theta_1) - 
% 3 v_3^2 \cos(2 \theta_2) \sin(2 \theta_1)  +v_2^2 \cos(2 \theta_1) \sin(2 \theta_2) + 
% 3 v_3^2 \cos(2 \theta_1) \sin(2 \theta_2)
%\right).
%\end{align*}
%Next, we replace the expressions  above into the equation of Codazzi for $(X,Y,Z)=(E_1,E_2,E_2)$
\begin{align*}
& v_3 (3v_2^4-v_2^2v_3^2+v_3^4 + 15\sqrt{3} \, v_2^3E_1(v_3))=0,\\
& v_2 (3v_2^4-3 v_2^2v_3^2+4v_3^4) + 
3\sqrt{3} (4v_2^4-7v_2^2v_3^2-v_3^4) E_1(v_3)=0.
\end{align*}
These equations for $E_1(v_3)$ are only compatible if $v_2=v_3=0$, which implies that $V=0$ and hence that $M$ is totally geodesic. 

\textbf{Case 3: $v_2 = (v_1^2 + v_3^2) \left[(v_1^2 + v_3^2) \sin(2(\theta_1-\theta_2)) - v_1^2 \sin(2(\theta_1-\theta_3)) \right] = 0$.} The expressions for $E_2(v_3)$ and $E_2(v_1)$ obtained in the beginning of the proof, before equations \eqref{system1}, simplify to 
\begin{align*}
& E_2(v_3) = \frac{v_1 (6v_1^4 + 5v_1^2 v_3^2 + 4v_3^4)}{ 3\sqrt{3} (8 v_1^4 + 6 v_1^2 v_3^2 + 3 v_3^4)}, \\
& E_2(v_1) = -\frac{v_3(10v_1^4 + 8v_1^2 v_3^2 + 3v_3^4)}{ 3\sqrt{3} (8 v_1^4 + 6 v_1^2 v_3^2 + 3 v_3^4)}.
\end{align*}
If we substitute the above expressions in the Codazzi equation for $(X,Y,Z)=(E_1,E_2,E_3)$, we obtain that $v_3(v_1^2+v_3^2)=0$. We consider two cases in order to prove that $M$ is totally geodesic. If $v_1^2+v_3^2=0$, we have $V=0$, since we already have $v_2=0$ in this case. If $v_3=0$, then the equation of Codazzi for $(X,Y,Z)=(E_1,E_2,E_1)$ yields immediately that $v_1=0$ and hence also that $V=0$. In both cases, we conclude that $M$ is totally geodesic.

\textbf{Case 4: $v_3 = (v_1^2+v_2^2) \left[ v_1^2 \sin(2(\theta_1-\theta_2))-(v_1^2+v_2^2)\sin(2(\theta_1-\theta_3)) \right] = 0$.} This case is analogous to Case 3. In fact, if we interchange $E_2$ and $E_3$ and replace $E_1$ by $-E_1$ in order to keep the orientation, $v_2$ will interchange with $v_3$ and $\theta_2$ with $\theta_3$, which reduces Case 4 to Case 3.

\textbf{Case 5.} In the last case, we have
\begin{equation}\label{system2}
\begin{aligned}
& (4 v_1^4 + v_3^4 + 4v_1^2v_2^2 + 12 v_1^2v_3^2 + v_2^2v_3^2) \sin(2 (\theta_1 - \theta_2)) \\
& \hspace{4cm} -(4v_1^4 + 4v_2^4 + 3 v_3^4 + 8v_1^2v_2^2  + 7 v_2^2 v_3^2) \sin(2 (\theta_1 - \theta_3)) = 0, \\
& (4v_1^4 + 3v_2^4  + 4 v_3^4 + 8v_1^2v_3^2 + 7v_2^2v_3^2) \sin(2(\theta_1 - \theta_2)) \\ 
& \hspace{4cm} - (4v_1^4 + v_2^4 + 12v_1^2v_2^2 + 4v_1^2v_3^2 + v_2^2v_3^2) \sin(2(\theta_1-\theta_3)) = 0.
\end{aligned}
\end{equation} 
We regard \eqref{system2} as a system of equations in the unknowns $\sin(2 (\theta_1 - \theta_2))$ and $\sin(2 (\theta_1 - \theta_3))$. Let us first assume that the determinant of the system is non-zero. This implies that $\sin(2(\theta_1-\theta_2))=\sin(2(\theta_1-\theta_3))=0$, which leads to $\theta_1=\theta_2+k_1\frac{\pi}{2}$ and  $\theta_1=\theta_3+k_2\frac{\pi}{2}$ for some integers $k_1$ and $k_2$. These equalities imply that $\theta_2=\theta_3+k_3\frac{\pi}{2}$, with $k_3=k_2-k_1$. We remark that not all $k_i$ can be odd. At least one of them is even, which implies that at least two of the angle functions are equal modulo $\pi$ and hence that the submanifold is totally geodesic by Lemma \ref{lem:angle_functions}. Finally, we assume that the determinant of the system is zero, i.e., that
\begin{equation*}
4 (v_2^2 + v_3^2) (v_1^2 + v_2^2 + v_3^2) (2 v_1^2 + v_2^2 + v_3^2) (4 v_1^2 - 3 (v_2^2 + v_3^2))=0.
\end{equation*}
Given \eqref{ec}, this implies that $v_2=v_3=0$, which was already investigated in Case 1. 

We conclude that, in all of the cases, we are dealing with a totally geodesic submanifold, which finishes the proof.
\end{proof}

\end{document}